\documentclass{amsart}%
\usepackage{amsfonts}
\usepackage{amsmath}
\usepackage{amssymb}
\usepackage{graphicx}%
\setcounter{MaxMatrixCols}{30}
%TCIDATA{OutputFilter=latex2.dll}
%TCIDATA{Version=5.50.0.2953}
%TCIDATA{CSTFile=amsartci.cst}
%TCIDATA{Created=Tuesday, April 13, 2010 21:48:04}
%TCIDATA{LastRevised=Tuesday, July 19, 2022 21:11:08}
%TCIDATA{<META NAME="GraphicsSave" CONTENT="32">}
%TCIDATA{<META NAME="SaveForMode" CONTENT="1">}
%TCIDATA{BibliographyScheme=Manual}
%TCIDATA{<META NAME="DocumentShell" CONTENT="Articles\SW\AMS Journal Article">}
%TCIDATA{Language=American English}
%BeginMSIPreambleData
\providecommand{\U}[1]{\protect\rule{.1in}{.1in}}
%EndMSIPreambleData
\newtheorem{theorem}{Theorem}
\theoremstyle{plain}

\newtheorem{corollary}{Corollary}

\newtheorem{example}{Example}

\newtheorem{lemma}{Lemma}

\newtheorem{remark}{Remark}

\numberwithin{equation}{section}
\begin{document}
\title[ ]{Quantitative Korovkin theorems for sublinear, monotone and strongly translatable operators in $L_{p}([0, 1]), 1\le p\le +\infty$}
\author{Sorin G. Gal}
\address{Department of Mathematics and Computer Science\\
University of Oradea\\
University\ Street No. 1, Oradea, 410087, Romania\\
and Academy of Romanian Scientists, Splaiul Independentei nr. 54, 050094,
Bucharest, Romania}
\email{galso@uoradea.ro, galsorin23@gmail.com}
\author{Constantin P. Niculescu}
\address{Department of Mathematics, University of Craiova\\
Craiova 200585, Romania and The Institute of Mathematics of the Romanian
Academy, Bucharest, Romania}
\email{constantin.p.niculescu@gmail.com}
\date{September 28, 2022}
\subjclass[2000]{41A35, 41A36, 41A63}
\keywords{Korovkin type theorems, monotone operator, sublinear operator, weakly
nonlinear operator, $C([0, 1])$-space, $L_{p}([0, 1])$-space, $1\le p \le +\infty$, second order modulus of smoothness, $L_p$-modulus of smoothness of orders 1 and 2, quantitative estimates.}

\begin{abstract}
By extending the classical quantitative approximation results for positive and linear operators in $L^{p}([0, 1]), 1\le p \le +\infty$ of Berens and DeVore in 1978 and of Swetits and Wood in 1983 to the more general case of sublinear, monotone and strongly translatable operators, in this paper we obtain quantitative estimates in terms of the second order and third order moduli of smoothness, in Korovkin type theorems. Applications to concrete examples are included and an open question concerning interpolation theory for sublinear, monotone and strongly translatable operators is raised.
\end{abstract}
\maketitle

\section{Introduction}

Korovkin's theorem \cite{Ko1953}, \cite{Ko1960} provides a very simple test of
convergence to the identity for any sequence $(T_{n})_{n}$ of positive linear
operators that map $C\left(  [0,1]\right)  $ into itself: the occurrence of
this convergence for the functions $e_{0}(x)=1, e_{1}(x)=x$ and $e_{2}(x)=x^{2}$. In other words, the
fact that
\[
\lim_{n\rightarrow\infty}T_{n}(f)=f\text{\quad uniformly on }[0,1]
\]
for every $f\in C\left(  [0,1]\right)  $ reduces to the status of the three
aforementioned functions. Due to its simplicity and usefulness, this result
has attracted a great deal of attention leading to numerous generalizations.
Part of them are included in the authoritative monograph of Altomare and
Campiti \cite{AC1994} and the excellent survey of Altomare \cite{Alt2010}. See
\cite{Alt2021}, \cite{Alt2021b}, \cite{Alt2022}, for some very recent contributions.

In the case of positive and linear operators, quantitative estimates in terms of the uniform norm and modulus of continuity for the classical Korovkin's theorems were obtained by Shisha and Mond in \cite{SM}, and for the case of $L_{p}$ spaces with $1\le p <\infty$ were obtained by Berens and DeVore in \cite{BD} and by Swetits and Wood in \cite{SW1}.

Korovkin's theorem was extended to the framework of sublinear and monotone
operators acting on function spaces defined on appropriate subsets $K$ of
$\mathbb{R}^{N}$ in Gal and Niculescu \cite{Gal-Nic-Med}, \cite{Gal-Nic-Aeq},
\cite{Gal-Nic-RACSAM} and \cite{Gal-Nic-subm}.
Also, a quantitative estimate in terms of the uniform norm and uniform modulus of continuity in the operator version of Korovkin type theorems for the case of monotone and sublinear operators, was obtained in Gal and Niculescu  \cite{Gal-Nic-subm2}.

The goal of the present paper is to obtain quantitative estimates in Korovkin's theorems  in terms of uniform norm and the second order modulus of smoothness in $C([0, 1])$ and in terms of $L_p$ norm and second and third $L_p$ moduli of smoothness, with $1\le p \le \infty$. Our results are based on the extensions of some classical results for positive and linear operators of Berens and DeVore in \cite{BD} and of Swetits and Wood in \cite{SW1}, to the more general frame of sublinear, monotone and strongly translatable operators.

Section 2 contains some preliminaries on weakly nonlinear and monotone operators. The main results for $p=1$ and $p=\infty$ are proved in Section 3
by adapting the results for positive and linear operators in Berens and DeVore \cite{BD} to sublinear monotone and strongly translatable operators. The case $1 < p < \infty$ of Theorem 3 in Berens-DeVore \cite{BD} is based on an interpolation  technique for linear continuous  operators. Therefore an extension of Theorem 3 in Berens-DeVore \cite{BD} in our new frame, would require  an extension of this technique to sublinear monotone and strongly translatable operators. But because for the moment it seems that such a theory is not known, the
quantitative approximation results for $1 < p < \infty$ are obtained in Section 4 by adapting the results for positive and linear operators in Swetits and Wood \cite{SW1}, to sublinear monotone and strongly translatable operators.
Section 5 presents some applications of the main results obtained to some concrete examples. Section 6 raises as open question the extension of the interpolation technique to sublinear monotone and strongly translatable operators.

\section{Weakly nonlinear operators acting on ordered Banach spaces}

The goal of this section is to describe a class of nonlinear operators which provides a
convenient framework for the extension of Berens and DeVore's results in \cite{BD}.

Given a metric space $X,$ we attach to it the vector lattice $\mathcal{F}(X)$
of all real-valued functions defined on $X$, endowed with the metric $d$ and
the pointwise ordering.

Suppose that $X$ and $Y$ are two metric spaces and $E$ and $F$ are
respectively ordered vector subspaces (or subcones of the positive cones) of
$\mathcal{F}(X)$ and $\mathcal{F}(Y)$ and that $\mathcal{F}(X)$ contains the
unity. An operator $T:E\rightarrow F$ is said to be a \emph{weakly nonlinear}
if it satisfies the following two conditions:

\begin{enumerate}
\item[(SL)] (\emph{Sublinearity}) $T$ is subadditive and positively
homogeneous, that is,%
\[
T(f+g)\leq T(f)+T(g)\quad\text{and}\quad T(\alpha f)=\alpha T(f)
\]
for all $f,g$ in $E$ and $\alpha\geq0;$

\item[(TR)] (\emph{Translatability}) $T(f+\alpha\cdot1)=T(f)+\alpha T(1)$ for
all functions $f\in E$ and all numbers $\alpha\geq0.$
\end{enumerate}

In the case when $T$ is \emph{unital} (that is, $T(1)=1)$ the condition of
translatability takes the form%
\[
T(f+\alpha\cdot1)=T(f)+\alpha1,
\]
for all $f\in E$ and $\alpha\geq0.$

A stronger condition than translatability is

\begin{enumerate}
\item[(TR$^{\ast}$)] (\emph{Strong translatability}) $T(f+\alpha
\cdot1)=T(f)+\alpha T(1)$ for all functions $f\in E$ and all numbers
$\alpha\in\mathbb{R}.$
\end{enumerate}

The last condition occurs naturally in the context of Choquet's integral,
being a consequence of what is called there the property of \emph{comonotonic
additivity}, that is,

\begin{enumerate}
\item[(CA)] $T(f+g)=T(f)+T(g)$ whenever the functions $f,g\in E$ are
comonotone in the sense that%
\[
(f(s)-f(t))\cdot(g(s)-g(t))\geq0\text{\quad for all }s,t\in X.
\]
See \cite{Gal-Nic-Aeq} and \cite{Gal-Nic-JMAA}, as well as the references therein.
\end{enumerate}

In this paper we are interested in those called weakly nonlinear, which verify the following condition:

\begin{enumerate}
\item[(M)] (\emph{Monotonicity}) $f\leq g$ in $E$ implies $T(f)\leq T(g)$ for
all $f,g$ in $E.$
\end{enumerate}

\begin{remark}
\label{rem1} If $T$ is a weakly nonlinear and monotone operator, then
\[
T(\alpha\cdot1)=\alpha\cdot T(1)\text{\quad for all }\alpha\in\mathbb{R}.
\]
Indeed, for $\alpha\geq0$ the property follows from positive homogeneity.
Suppose now that $\alpha<0.$ Since $T(0)=0$ and $-\alpha>0$, by
translatability it follows that $0=T(0)=T(\alpha\cdot1+(-\alpha\cdot
1))=T(\alpha\cdot1)+(-\alpha)T(1)$, which implies $T(\alpha)=\alpha T(1)$.
\end{remark}

Ergodic theory, harmonic analysis, probability theory and Choquet's theory of integration offer numerous examples of
monotone sublinear and strongly translatable operators, see, e.g.,
Gal and Niculescu \cite{Gal-Nic-Aeq}, \cite{Gal-Nic-JMAA}, \cite{Gal-Nic-RACSAM}.

Suppose that $E$ and $F$ are two ordered Banach spaces and $T$ $:E\rightarrow
F$ is an operator (not necessarily linear or continuous).

If $T$ is positively homogeneous, then
\[
T(0)=0.
\]
As a consequence,
\[
-T(-f)\leq T(f)\text{\quad for all }f\in E
\]
and every positively homogeneous and monotone operator $T$ maps positive
elements into positive elements, that is,%
\begin{equation}
Tf\geq0\text{\quad for all }f\geq0. \label{pos-op}%
\end{equation}
Therefore, for linear operators the property (\ref{pos-op}) is equivalent to monotonicity.

Every sublinear operator is convex and a convex operator is sublinear if and
only if it is positively homogeneous.

The \emph{norm} of a continuous sublinear operator $T:E\rightarrow F$ can be
defined via the formulas%
\begin{align*}
\left\Vert T\right\Vert  &  =\inf\left\{  \lambda>0:\left\Vert T\left(
f\right)  \right\Vert \leq\lambda\left\Vert f\right\Vert \text{ for all }f\in
E\right\} \\
&  =\sup\left\{  \left\Vert T(f)\right\Vert :f\in E,\text{ }\left\Vert
f\right\Vert \leq1\right\}  .
\end{align*}
A sublinear operator may be discontinuous, but when it is continuous, it is
Lipschitz continuous. More precisely, if $T:E\rightarrow F$ is a continuous
sublinear operator, then
\[
\left\Vert T\left(  f\right)  -T(g)\right\Vert \leq2\left\Vert T\right\Vert
\left\Vert f-g\right\Vert \text{\quad for all }f\in E.
\]
All sublinear and monotone operators are Lipschitz continuous, as stated by the following result.

\begin{theorem}
\label{thmKrein}Every sublinear and monotone operator $T$ $:E\rightarrow F$
verifies the inequality
\[
\left\vert T(f)-T(g)\right\vert \leq T\left(  \left\vert f-g\right\vert
\right)  \text{\quad for all }f,g\in E
\]
and thus it is Lipschitz continuous with Lipschitz constant equals to
$\left\Vert T\right\Vert ,$ that is,
\[
\left\Vert T(f)-T(g)\right\Vert \leq\left\Vert T\right\Vert \left\Vert
f-g\right\Vert \text{\quad for all }f,g\in E.
\]
\end{theorem}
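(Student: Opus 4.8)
The plan is to establish the pointwise (lattice) inequality first, and then pass to norms. For the lattice inequality, I would start from the decomposition $f = g + (f-g)$ and apply subadditivity from (SL) to obtain $T(f) \le T(g) + T(f-g)$. Since $f - g \le |f-g|$, monotonicity (M) gives $T(f-g) \le T(|f-g|)$, whence $T(f) - T(g) \le T(|f-g|)$. Interchanging the roles of $f$ and $g$ and using $|g-f| = |f-g|$ yields $T(g) - T(f) \le T(|f-g|)$ as well. Combining the two one-sided estimates produces $-T(|f-g|) \le T(f)-T(g) \le T(|f-g|)$, which is exactly the claimed inequality $|T(f)-T(g)| \le T(|f-g|)$ in the order of $F$. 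Note that only (SL) and (M) enter here; translatability is not needed.

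For the Lipschitz bound, I would take norms in the ordered Banach space $F$. Using that the norm is a lattice norm (monotone and absolute, so that $0 \le a \le b$ implies $\|a\| \le \|b\|$ and $\||h|\| = \|h\|$), the lattice inequality gives $\|T(f)-T(g)\| = \||T(f)-T(g)|\| \le \|T(|f-g|)\|$. Since $|f-g| \ge 0$, the definition of the operator norm of a sublinear operator yields $\|T(|f-g|)\| \le \|T\| \cdot \||f-g|\| = \|T\|\,\|f-g\|$, and thus $\|T(f)-T(g)\| \le \|T\|\,\|f-g\|$. This already shows that $T$ is Lipschitz with constant at most $\|T\|$.

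To see that $\|T\|$ is the exact Lipschitz constant, I would specialize to $g = 0$. Positive homogeneity forces $T(0)=0$, so $\|T(f)-T(0)\| = \|T(f)\|$; hence any admissible Lipschitz constant $L$ must satisfy $\|T(f)\| \le L\|f\|$ for every $f$, and taking the supremum over $\|f\| \le 1$ gives $L \ge \|T\|$. Together with the previous paragraph this forces the best Lipschitz constant to equal $\|T\|$.

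The argument is essentially a Krein-type computation, and I expect no serious obstacle. The only points requiring care are the interpretation of $|T(f)-T(g)|$ as the modulus in the vector lattice $F$ and the use of the lattice-norm properties of $F$ when passing from the order inequality to the norm inequality; both are standard features of the ordered Banach space setting assumed here.
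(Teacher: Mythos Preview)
Your argument is correct and is precisely the standard Krein-type computation one expects here. Note, however, that the paper does not actually prove this theorem in the text: it simply writes ``See \cite{Gal-Nic-subm} for details'' and treats the result as known. So there is no in-paper proof to compare against; your proposal supplies exactly the kind of short proof the cited reference would contain, using only (SL) and (M) for the lattice inequality and the lattice-norm property of $F$ to pass to norms.
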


See \cite{Gal-Nic-subm} for details. Theorem \ref{thmKrein} is a
generalization of a classical result of M. G. Krein concerning the continuity
of positive linear functionals. See \cite{AA2001}.

\begin{remark}
The above weakly nonlinear operators were also called in \cite{Gal-Nic-Aeq} as Choquet operators. Notice that the classical integral representation of such operators were generalized by using the Choquet-Bochner integral of a real-valued function with respect to a vector capacity
in Gal and Niculescu \cite{Gal-Nic-JMAA}.
\end{remark}

\section{Main results for $p=\infty$ and $p=1$}

For $r\ge 0$ and $1\le p \le +\infty$, let us denote by $W_{p}^{r}([0, 1])$ the Sobolev space of all functions $f\in L_{p}([0, 1])$ such that the derivatives
$f^{(\alpha)}$ exist (in the Sobolev sense) and are in $L_{p}([0, 1])$, for all $\alpha\in \{0, 1, ... , r\}$, endowed with the norm
$$\|f\|_{r, p}=\max_{j=0, 1, ... , r}\|f^{(j)}\|_{p}.$$
Also, if $T:C([0, 1])\to C([0, 1])$ is a monotone sublinear operator, denote
$$\lambda_{p}=\max\{\|T(e_0)-e_0\|_{p}, \|T(e_1)-e_1\|_{p}, \|T(-e_1)+e_1\|_{p}, \|T(e_2)-e_2\|_{p}\}.$$

The first main result is the following.

\begin{theorem}\label{thm1} Let $1\le p \le +\infty$ and $T:C([0, 1])\to C([0, 1])$ be a sublinear monotone and strongly translatable operator. If $f\in W^{\infty}_{2}$, then
\begin{equation}\label{eq1}
\|f - T(f)\|_{p}\le C\cdot \|f\|_{2, \infty}\cdot \lambda_{p},
\end{equation}
with $C>0$ an absolute  constant.
\end{theorem}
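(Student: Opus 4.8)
The plan is to bound $\|f - T(f)\|_p$ for a smooth function $f$ by exploiting the fact that $T$ agrees with the identity on the test functions $e_0, e_1, e_2$ up to the quantities measured by $\lambda_p$. Let me think about the standard Shisha--Mond / Korovkin strategy adapted to the sublinear monotone setting.

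First I would fix a point $x \in [0,1]$ and use a Taylor expansion of $f$ around $x$ to second order. Since $f \in W_2^\infty$, I can write, for each $t \in [0,1]$,
$$
f(t) = f(x) + f'(x)(t-x) + \frac{1}{2} f''(\xi)(t-x)^2,
$$
which gives the two-sided pointwise bound
$$
\bigl| f(t) - f(x) - f'(x)(t-x) \bigr| \le \tfrac{1}{2} \|f''\|_\infty (t-x)^2 \le \tfrac{1}{2}\|f\|_{2,\infty}(t-x)^2.
$$
Equivalently, writing $\psi_x(t) = (t-x)^2$, I have the function inequalities
$$
f(x) + f'(x)(\cdot - x) - \tfrac{1}{2}\|f\|_{2,\infty}\psi_x \le f \le f(x) + f'(x)(\cdot - x) + \tfrac{1}{2}\|f\|_{2,\infty}\psi_x
$$
as elements of $C([0,1])$.

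Next I would apply the monotonicity of $T$ to these inequalities. The key point is that because $T$ is sublinear, monotone, and \emph{strongly} translatable, it interacts nicely with the affine part $f(x) + f'(x)(\cdot - x)$. Using strong translatability I can pull the constant $f(x)$ through $T$ as $f(x)\,T(e_0)$ (via Remark~\ref{rem1}, which allows arbitrary real scalars on the unity), and the linear term $f'(x)(\cdot - x)$ must be handled by the sublinear estimates; the crucial subtlety is that sublinearity only gives subadditivity, so I expect to need the two-sided device $-T(-g) \le T(g) \le T(g)$ together with the monotone Krein-type inequality from Theorem~\ref{thmKrein} to control $T$ applied to the linear term $f'(x)(t-x)$. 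Evaluating at $t = x$ and subtracting $f(x)$, the error $|f(x) - T(f)(x)|$ then splits into (i) a contribution from $|f(x)||T(e_0)(x)-e_0(x)|$, (ii) a contribution from $|f'(x)|$ times deviations of $T$ on $\pm e_1$ relative to $e_1$, and (iii) the second-order term $\tfrac{1}{2}\|f\|_{2,\infty} \cdot T(\psi_x)(x)$. The moment $T(\psi_x)(x) = T\bigl((\cdot - x)^2\bigr)(x)$ expands, by sublinearity and strong translatability, into a combination of $T(e_2), T(e_1), T(e_0)$ evaluated at $x$, whose deviation from the exact value $0$ is precisely captured pointwise by the $e_0, e_1, e_2$ defects.

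The final step is to convert these pointwise bounds into the $L_p$ estimate. After collecting terms, the pointwise error $|f(x) - T(f)(x)|$ is dominated by $\|f\|_{2,\infty}$ times a sum of the pointwise quantities $|T(e_0)-e_0|(x)$, $|T(e_1)-e_1|(x)$, $|T(-e_1)+e_1|(x)$, and $|T(e_2)-e_2|(x)$. Taking the $L_p$ norm in $x$ and using the triangle inequality, each such term contributes one of the four quantities inside the maximum defining $\lambda_p$, so the whole sum is bounded by an absolute constant times $\lambda_p$. This yields $\|f - T(f)\|_p \le C\,\|f\|_{2,\infty}\,\lambda_p$ uniformly in $1 \le p \le \infty$, since the argument is entirely pointwise before the final norm is applied. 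The main obstacle I anticipate is the careful bookkeeping of the linear term $f'(x)(\cdot - x)$: because $T$ is only sublinear rather than linear, $T$ applied to a function changing sign cannot be split linearly, and it is precisely to control this that the quantity $\|T(-e_1)+e_1\|_p$ (measuring the failure of $T(-e_1) = -T(e_1)$) is included in $\lambda_p$; making this step rigorous while keeping the constant $C$ absolute and independent of $p$ is the delicate part.
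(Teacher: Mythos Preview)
Your approach is correct. The paper takes a somewhat different route, following Berens--DeVore more closely: it partitions $[0,1]$ into $k$ subintervals $I_i$ with centers $\xi_i$, uses the \emph{fixed} linear approximations $l_i(t)=f(\xi_i)+f'(\xi_i)(t-\xi_i)$ on each $I_i$, decomposes $T(f)-f$ on $I_i$ into $(T(f)-T(l_i))+(T(l_i)-l_i)+(l_i-f)$, and then lets $k\to\infty$ so that the residual $\varepsilon^{2}$ contributions coming from $(x-\xi_i)^{2}$ and from $l_i-f$ vanish in the limit. Your Shisha--Mond style expansion around the running point $x$ itself bypasses this partition/limit step entirely: since $l_x(x)=f(x)$ and $(x-x)^{2}=0$, those residual terms are identically zero. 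The algebraic core---pulling the constant through $T$ via strong translatability, handling $T(f'(\cdot)e_1)$ by a case split on the sign of $f'$ so that positive homogeneity applies to either $e_1$ or $-e_1$, and bounding $T((\cdot-x)^{2})(x)$ by the $e_0,\,-e_1,\,e_2$ defects via sublinearity---is essentially the same in both arguments, and both invoke the Krein-type inequality of Theorem~\ref{thmKrein} for the $|T(f)-T(l)|$ term. The partition technique is what Berens--DeVore need for the genuinely $L_p$ Sobolev case; since the present statement only assumes $f\in W_{2}^{\infty}$, your more direct pointwise argument suffices and is in fact slightly cleaner.
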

\begin{proof} We will adapt the considerations for the positive linear operator in the proof of Theorem 1 in \cite{BD}, to the properties of the monotone sublinear and strongly translatable operators.

Firstly, let us consider that $1\le p <+\infty$.

Choose $\varepsilon=\frac{1}{k}$, $k\in \mathbb{N}$ and write $(0, 1)$ as an union of $k$ subntervals $I_i$, pairwise disjoint, of lengths $\le \frac{1}{k}$.
For each $i\in \{1, ..., k\}$ let $\xi_{i}$ be the center of $I_i$ and define
$$l_{i}(x)=f(\xi_{i})+f^{\prime}(\xi_i)(x-\xi_{i}).$$
Reasoning exactly as for the relation $(2.3)$ in the paper of Berens-DeVore, we arrive at the estimate
\begin{equation}\label{eq2}
|f(x)-l_{i}(x)|\le \frac{1}{2}\|f\|_{2, \infty}|x-\xi_{i}|^{2}.
\end{equation}
Then, since for each $i\in \{1, ..., k\}$ and almost all $x\in I_{i}$ we have
$$T(f)(x)-f(x)=T(f)(x)-T(l_i)(x)+T(l_i)(x)-l_i(x)+l_i(x)-f(x)$$
and taking into account the property of $T$ in Theorem 1 too, it follows
$$|T(f)(x)-f(x)|\le T(|f-l_i|)(x)+|T(l_i)(x)-l_i(x)|+|l_i(x)-f(x)|$$
and consequently for almost everywhere  $x\in [0, 1]$ we obtain
$$|T(f)(x)-f(x)|$$
$$\le \sum_{i=1}^{k}T(|f-l_i|)(x)\chi_{I_i}(x)+\sum_{i=1}^{k}|T(l_i)(x)-l_i(x)|\chi_{I_i}(x)+\sum_{i=1}^{k}|l_i(x)-f(x)|\chi_{I_i}(x)$$
$$:=S_1(x)+S_2(x)+S_3(x).$$
By inequality (\ref{eq2}), for almost everywhere $x\in [0, 1]$, we obtain
$$S_1(x)\le \frac{1}{2}|\|f\|_{2, \infty}T(|t-\xi_i|^{2})(x).$$
But by the sublinearity of $T$, we get
$$T(|t-\xi_i|^{2})(x)=T[e_2(t)+2 (-e_1)(t) \xi_i + \xi_i^{2})e_{0}(t)](x)$$
$$\le T(e_2)(x)+2 \xi_i T(-e_1)(x)+\xi_i^{2}T(e_0)(x)$$
$$=T(e_2)(x)-e_2(x)+2 \xi_i [T(-e_1)(x)+e_1(x)]+\xi_{i}^{2}[T(e_0)(x)-e_0(x)]+(x-\xi_i)^{2}$$
$$\le |T(e_2)(x)-e_2(x)|+2  |T(-e_1)(x)+e_1(x)|+|T(e_0)(x)-e_0(x)|+(x-\xi_i)^{2},$$
which immediately implies (as in the proof for the estimate (2.6) in Berens and DeVore \cite{BD})
$$\|S_1\|_{p}\le \frac{1}{2}\le \|f\|_{2, \infty}(4 \lambda_p+ \varepsilon^{2}).$$
For the estimate of $S_2(x)$, for each $i\in \{1, ..., k\}$ and almost all $x\in I_i$, we get
$$|T(l_i)(x)-l_i(x)|=|T[f(\xi_i)-f^{\prime}(\xi_i) \xi_i + f^{\prime}(\xi_i)e_1(t)](x)-f(\xi_i)+f^{\prime} (\xi_{i})\xi_i -x f^{\prime}(\xi_i)|$$
$$=|(f(\xi_i)-f^{\prime}(\xi_i) \xi_i)[T(e_0)(x)-1]+T(f^{\prime}(\xi_i)e_1)(x)-x f^{\prime}(\xi_i)|$$
$$\le (\|f\|_{\infty}+\|f^{\prime}\|_{\infty})|T(e_0)(x)-1|+|T(f^{\prime}(\xi_i)e_1)(x)-x f^{\prime}(\xi_i)|.$$
Now, if $f^{\prime}(\xi_i)\ge 0$, then
$$|T(f^{\prime}(\xi_i)e_1)(x)-x f^{\prime}(\xi_i)|=f^{\prime}(\xi_i) |T(e_1)(x)-x|\le \|f^{\prime}\|_{\infty}|T(e_1)(x)-x|$$
and if f $f^{\prime}(\xi_i)< 0$, then
$$|T(f^{\prime}(\xi_i)e_1)(x)-x f^{\prime}(\xi_i)|$$
$$=|T(-f^{\prime}(\xi_i)(-e_1))(x)+x (-f^{\prime}(\xi_i))|=-f^{\prime}(\xi_i)|T(-e_1)(x)+x|$$
$$\le \|f^{\prime}\|_{\infty}|T(-e_1)(x)+x|.$$
From these estimates it immediately follows that
$$|T(l_i)(x)-l_i(x)|$$
$$\le (\|f\|_{\infty}+\|f^{\prime}\|_{\infty})[|T(e_0)(x)-1|+|T(e_1)(x)-x|+|T(-e_1)(x)+x|]$$
$$\le 2 \|f\|_{1, \infty}[|T(e_0)(x)-1|+|T(e_1)(x)-x|+|T(-e_1)(x)+x|+|T(e_2)(x)-x^2|],$$
which leads to
$$\|S_2\|_{p}\le 2 \|f\|_{1, p} \lambda_{p}.$$
Finally, from (\ref{eq2}) it easily follows that
$$\|S_3\|_{p}\le \frac{1}{2}\|f\|_{2, \infty} \varepsilon^{2}.$$
Collecting now all the estimates for $S_1, S_2, S_3$ and taking into account that $\varepsilon>0$ is arbitrary small, we arrive at
the estimate in the statement.

The case $p=+\infty$ easily follows by using the above lines of proof in the case when $1\le p <+\infty$.
\end{proof}

If we define the $r$th order modulus of smoothness in $L_{p}([0, 1])$, $1\le p\le \infty$, by
$$\omega_{r, p}(f; \delta)=\sup_{|h|\le \delta}\|\Delta_{h}^{r}\|_{p},$$
where $\Delta_{h}^{r}(x)=\sum_{j=0}^{r}(-1)^{j}{r \choose j}f(x+(r-j)h)$, we can state the following result which is an analogue of Theorem 2 in Berens and DeVore \cite{BD}.

\begin{theorem}\label{thm2} (i) If $T:C([0, 1])\to C([0, 1])$ is a sublinear monotone and strongly translatable operator, then for all
$f\in C([0, 1])$ we have
$$\|f - T(f)\|_{\infty}\le C\{\|f\|_{\infty} \lambda_{\infty}+ \omega_{2, \infty}(f; \lambda_{\infty}^{1/2})\},$$
where $C$ depends only on the norm of $T$.

(ii) Let $p=1$ and $T:L_{1}([0, 1])\to L_{1}([0, 1])$ be a monotone sublinear and strongly translatable operator. Then for all $f\in L_{1}([0, 1])$ we have
$$\|f - T(f)\|_{1}\le C\{\|f\|_{1} \lambda_{1}+\omega_{3, 1}(\lambda_{1}^{1/3})\},$$
where $C$ depends only on the norm of $T$.
\end{theorem}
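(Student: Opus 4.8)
The plan is to derive both estimates from Theorem~\ref{thm1} by a $K$-functional argument, using crucially that $T$ is Lipschitz (Theorem~\ref{thmKrein}). For a smoothness order $r$ and $1\le p\le\infty$, recall the $K$-functional $K_r(f,t)_p=\inf\{\|f-g\|_p+t\|g^{(r)}\|_p:g\in W_p^r\}$ and its classical equivalence $K_r(f,\delta^r)_p\asymp\omega_{r,p}(f;\delta)$. The common first step, for an arbitrary smooth approximant $g$, is the triangle inequality
$$\|f-T(f)\|_p\le\|f-g\|_p+\|T(g)-T(f)\|_p+\|g-T(g)\|_p,$$
where I would bound the middle term by $\|T\|\cdot\|f-g\|_p$ via Theorem~\ref{thmKrein} and the last term by $C\|g\|_{2,\infty}\lambda_p$ via Theorem~\ref{thm1} (applied to the $p=1$ instance in part (ii), where $g\in W_1^3\subset C([0,1])$). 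Since $T$ is bounded, $\lambda_p$ is dominated by a constant depending only on $\|T\|$, so the factor $\lambda_p$ multiplying the $\|f-g\|_p$ contributions can be absorbed; it then remains to control the mixed Sobolev norm $\|g\|_{2,\infty}=\max\{\|g\|_\infty,\|g'\|_\infty,\|g''\|_\infty\}$ and optimize over $g$.

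For part (i) I would take $g$ nearly optimal for $K_2(f,\lambda_\infty)_\infty$. The lowest-order term is handled by $\|g\|_\infty\le\|f\|_\infty+\|f-g\|_\infty$, and the intermediate derivative by the Landau--Kolmogorov inequality $\|g'\|_\infty\le C(\|g\|_\infty+\|g''\|_\infty)$ on $[0,1]$. Collecting terms and absorbing the bounded factor $\lambda_\infty$ off the $\|f-g\|_\infty$ contributions yields
$$\|f-T(f)\|_\infty\le C\{\lambda_\infty\|f\|_\infty+\inf_g(\|f-g\|_\infty+\lambda_\infty\|g''\|_\infty)\}=C\{\lambda_\infty\|f\|_\infty+K_2(f,\lambda_\infty)_\infty\},$$
and the equivalence $K_2(f,\lambda_\infty)_\infty\asymp\omega_{2,\infty}(f;\lambda_\infty^{1/2})$ finishes this case.

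The $L_1$ statement is the delicate one, and the passage from a second-order estimate to a third-order modulus is where the real work lies. Here I would take $g$ nearly optimal for $K_3(f,\lambda_1)_1$ and bound the second-order \emph{sup}-norm quantity $\|g\|_{2,\infty}$ entirely by $L_1$ data up to the third derivative, using the one-dimensional Sobolev embedding $\|h\|_\infty\le\|h\|_1+\|h'\|_1$ together with the additive Landau--Kolmogorov inequalities $\|g^{(j)}\|_1\le C(\|g\|_1+\|g'''\|_1)$ for $j=1,2$, all valid on $[0,1]$ with absolute constants. These combine to give $\|g\|_{2,\infty}\le C(\|g\|_1+\|g'''\|_1)$, so that after using $\|g\|_1\le\|f\|_1+\|f-g\|_1$ and optimizing over $g$ one obtains
$$\|f-T(f)\|_1\le C\{\lambda_1\|f\|_1+K_3(f,\lambda_1)_1\}\le C\{\lambda_1\|f\|_1+\omega_{3,1}(f;\lambda_1^{1/3})\}.$$
The main obstacle is precisely this norm-and-order conversion: Theorem~\ref{thm1} only furnishes control in the mixed norm $\|g\|_{2,\infty}$, and it is the scale-invariant interpolation inequalities on the fixed interval $[0,1]$ that simultaneously raise the differentiation order from two to three and convert the sup-norm into the $L_1$-norm, thereby producing exactly the third-order $L_1$ modulus at the scale $\lambda_1^{1/3}$. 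I would therefore devote most care to verifying that the constants in these inequalities are absolute (so that the final $C$ depends only on $\|T\|$) and that the two $K$-functional equivalences hold with the scalings $\delta=\lambda_\infty^{1/2}$ and $\delta=\lambda_1^{1/3}$ respectively.
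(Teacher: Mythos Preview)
Your proposal is correct and follows essentially the same approach as the paper: a $K$-functional argument built on Theorem~\ref{thm1} and the Lipschitz property from Theorem~\ref{thmKrein}, with the crucial step in part (ii) being the embedding $\|g\|_{2,\infty}\le C\|g\|_{3,1}$. The paper states this last step by citing Lemma~1 of Berens--DeVore (the inequality $\|h\|_\infty\le\|h\|_{1,1}$ applied to $g,g',g''$), while you unpack it explicitly via the Sobolev embedding and Landau--Kolmogorov inequalities; the two are the same argument at different levels of detail.
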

\begin{proof}
The proof is based on the above Theorem \ref{thm1} and clearly that it is identical with the proof of Theorem 2 in Berens and DeVore \cite{BD}, based also on Lemma 1 in the same paper.

For the reader's convenience, we sketch below the proof. For $1\le p\le +\infty$ and $r\in \mathbb{N}$, let us consider the $K$ functional
$$K_{r, p}(f; t)=\inf\{\|f-g\|_{p} + t \|g\|_{p}; g\in W_{p}^{r}([0, 1])\}, t >0.$$
It is well-known the fact (see, e.g., relation (3.1) in \cite{BD}) that we have
\begin{equation}\label{eq3}
K_{r, p}(t; t)\le C(t^{r}\|f\|_{p}+\omega_{r, p}(f; t)),
\end{equation}
with $C>0$ depending only on $r$.

(i) For $p=\infty$, from the estimate in Theorem \ref{thm1} in this paper combined with (\ref{eq3}), we easily get
$$\|f-T(f)\|_{\infty}\le C_{T}\cdot K_{2, \infty}(f; \lambda_{\infty})\le C_{T}\{\lambda_{\infty}\|f\|_{\infty} +\omega_{2, \infty}(f; \lambda_{\infty}^{1/2})\}.$$

(ii) For $p=1$, Lemma 1 in \cite{BD} states that for all $f\in W_{1}^{1}$ we have $\|f\|_{\infty}\le \|f\|_{1, 1}$. From here, from the estimate in Theorem\ref{thm1} and from (\ref{eq3}), we obtain
$$\|g - T(g)\|_{1}\le C\|g\|_{2, \infty} \lambda_{1} \le \|g\|_{3, 1} \lambda_{1}, \mbox{ for each } g\in W_{1}^{3}$$
and consequently
$$\|f - T(f)\|_{1}\le C_{T} \cdot K_{3, 1}(f; \lambda_{1})\le C_{T}(\lambda_{1}\|f\|_{1}+\omega_{3, \infty}(f; \lambda_{1}^{1/3})).$$
\end{proof}

\section{Main results for $1 < p < \infty$}

Due to the reason mentioned at the end of Introduction, in this section we adapt the proofs in Swetits-Wood \cite{SW1}, to the case of  sublinear monotone and strongly translatable operators.

For the proof of the main result we need the following auxiliary result.

\begin{lemma}\label{lem1}
Let $(L_n)_{n}$ be a uniformly bounded sequence of sublinear, strongly translatable and monotone operators from $L_{p}([0, 1])$ into $L_{p}([0, ])$, where $ 1 < p < \infty$. Let $L_{p}^{(2)}([0, 1])$ be the space of those functions $f\in L_{p}([0, 1])$ with $f^{\prime}$ absolutely continuous and $f^{\prime \prime}\in L_{p}([0, 1])$.
Let us denote $\mu_{n}=\|T_{n}((e_1-x)^{2})(x)\|_{\infty}$ and
$$t_{n, p}=(\max\{\|L(e_0)-e_0\|_{p}, \|T_{n}(|e_1-x|)(x)\|_{p}, \mu_{n}\})^{1/2}.$$

If $\lim_{n\to \infty}t_{n, p}=0$, then for any $f\in L_{p}^{(2)}([0, 1])$, we have
$$\|T_{n}(f) - f\|_{p}\le M_{p}^{\prime}(\|f\|_{p}+\|f^{\prime \prime}\|_{p})t^{2}_{n, p},$$
where $M^{\prime}_{p} > 0$ is independent of $f$ and $n$.
\end{lemma}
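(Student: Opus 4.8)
The plan is to estimate $T_n(f)-f$ pointwise by using a second-order Taylor-type expansion of $f$ and then exploiting the sublinearity, monotonicity and strong translatability of $T_n$ to control each resulting term by the test quantities appearing in $t_{n,p}$. Since $f\in L_p^{(2)}([0,1])$, for a.e.\ fixed $x$ I would write, via the integral form of Taylor's remainder,
\[
f(t)=f(x)+f'(x)(t-x)+\int_x^t (t-s)f''(s)\,ds,
\]
so that, setting $l_x(t)=f(x)+f'(x)(t-x)$, one has $|f(t)-l_x(t)|\le \tfrac12\|f''\|_\infty (t-x)^2$ in the uniform case, and more carefully $\bigl(\int_0^1|f(t)-l_x(t)|^p\,dt\bigr)$-type bounds in the $L_p$ case. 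Here I expect one must be careful: the clean bound $\tfrac12\|f''\|_\infty(t-x)^2$ uses $\|f''\|_\infty$, but the statement only gives $\|f''\|_p$, so the remainder must be handled through an $L_p$ estimate on the Taylor integral rather than a sup bound. This is exactly the technical point where I would follow Swetits--Wood \cite{SW1} closely.

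Next I would split
\[
T_n(f)(x)-f(x)=\bigl[T_n(f)(x)-T_n(l_x)(x)\bigr]+\bigl[T_n(l_x)(x)-l_x(x)\bigr]+\bigl[l_x(x)-f(x)\bigr],
\]
noting the last bracket vanishes since $l_x(x)=f(x)$. For the first bracket I would invoke Theorem \ref{thmKrein}, which gives $|T_n(f)(x)-T_n(l_x)(x)|\le T_n(|f-l_x|)(x)$, and then dominate $|f-l_x|$ using the Taylor remainder and the test function $(e_1-x)^2$, so that this term is controlled by a combination of $T_n((e_1-x)^2)(x)$ (hence $\mu_n$) and the remainder integral. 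For the second bracket, I would expand $l_x(t)=\bigl(f(x)-xf'(x)\bigr)e_0(t)+f'(x)e_1(t)$ and apply strong translatability and positive homogeneity; the sign of $f'(x)$ forces a case distinction, using $e_1$ when $f'(x)\ge 0$ and $-e_1$ when $f'(x)<0$, exactly as in the proof of Theorem \ref{thm1}. This reduces $T_n(l_x)(x)-l_x(x)$ to coefficients $f(x)-xf'(x)$ and $f'(x)$ multiplied against $|T_n(e_0)(x)-1|$, $|T_n(e_1)(x)-x|$ and $|T_n(-e_1)(x)+x|$, the last two of which are dominated by $T_n(|e_1-x|)(x)$.

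Assembling the three contributions, taking $L_p$ norms and using that the sequence is uniformly bounded, each term is majorized by a constant times $\bigl(\|f\|_p+\|f''\|_p\bigr)$ multiplied by one of $\|T_n(e_0)-e_0\|_p$, $\|T_n(|e_1-x|)(x)\|_p$, or $\mu_n$, all of which are bounded by $t_{n,p}^2$ by the definition of $t_{n,p}$. The uniform boundedness of $(L_n)_n$ is what lets the constant $M_p'$ be chosen independent of $f$ and $n$, and the coefficients $|f(x)-xf'(x)|$ and $|f'(x)|$ are absorbed into $\|f\|_p+\|f''\|_p$ via the standard Sobolev-type control of $f$ and $f'$ on $[0,1]$. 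The main obstacle I anticipate is the $L_p$ treatment of the Taylor remainder: unlike the $p=\infty$ case of Theorem \ref{thm1}, one cannot simply pull out $\|f''\|_\infty$, so the estimate of the first and third brackets must go through an honest $L_p$ bound on $\int_x^t(t-s)f''(s)\,ds$, which is precisely the delicate interpolation-flavored step carried out in \cite{SW1} and which I would reproduce in the present sublinear, monotone, strongly translatable setting.
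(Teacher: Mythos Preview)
Your plan is correct and matches the paper's proof in all essential ingredients: the integral Taylor expansion of $f$ about the evaluation point $x$, the use of Theorem~\ref{thmKrein} to pass to $T_n$ of an absolute value, the Hardy--Littlewood majorant argument from Swetits--Wood \cite{SW1} to control $T_n\bigl(|\int_x^t(t-u)f''(u)\,du|\bigr)$ in $L_p$ by $K_p\,\mu_n\|f''\|_p$, and the Sobolev-type bound (the paper cites Goldberg--Meir \cite{Gold}, Theorem~3.1) to absorb $\|f\|_\infty,\|f'\|_\infty$ into $\|f\|_p+\|f''\|_p$.

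The one place where the paper is slightly more direct than your outline is the decomposition. Rather than splitting via the full linear approximant $l_x$ and then handling $T_n(l_x)-l_x$ by a sign-of-$f'(x)$ case distinction on $T_n(\pm e_1)$, the paper writes $T_n(f)(x)-f(x)=\bigl[T_n(f)(x)-f(x)T_n(e_0)(x)\bigr]+f(x)\bigl[T_n(e_0)(x)-1\bigr]$, observes by strong translatability that the first bracket equals $T_n\bigl(f'(x)(e_1-x)+\int_x^t(t-u)f''(u)\,du\bigr)(x)$, and then applies $|T_n(g)|\le T_n(|g|)$ and sublinearity in one stroke to get $\|f'\|_\infty\,T_n(|e_1-x|)(x)$ plus the remainder term. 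This bypasses the $T_n(e_1)$/$T_n(-e_1)$ split entirely and lands directly on the test quantity $T_n(|e_1-x|)(x)$ that appears in $t_{n,p}$. Your route works too (the quantities $|T_n(\pm e_1)(x)\mp x|$ are indeed controlled by $T_n(|e_1-x|)(x)$ up to a $|T_n(e_0)(x)-1|$ correction via Theorem~\ref{thmKrein}), but the paper's decomposition is shorter.
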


\begin{proof} Let $f\in L_{p}^{(2)}([0, 1])$ and assume that $f$ has  been extended outside of $[0, 1]$ so that $f^{\prime \prime}(x)=0$ if $x\notin [0, ]$.

By
$$T_{n}(f)(x)-f(x)=T_{n}(f)(x)-f(x)T_{n}(e_0)(x)+f(x)T_{n}(e_0)(x)-f(x)$$
$$=T_{n}(f)(x)-f(x)T_{n}(e_0)(x)+f(x)(T_{n}(e_0)(x)-1),$$
we obtain
\begin{equation}\label{eq4}
\|T_{n}(f)(x)-f(x)\|_{p}
\end{equation}
$$\le \|T_{n}(f)(x)-f(x)T_{n}(e_0)(x)\|_{p}+\|f\|_{\infty}\cdot \|T_{n}(e_0)(x)- e_{0}(x)\|_{p}.$$

For $t, x\in [0, 1]$, we have
$$f(t)-f(x)=f(t)-f(x)e_{0}(t)=f^{\prime}(x)(t-x)+\int_{x}^{t}(t-u)f^{\prime \prime}(u)du$$
which by applying to the both members $T_{n}$, implies
$$T_{n}(f)(x)-f(x)T_{n}(e_0)(x)=T_{n}\left [f^{\prime}(x)(e_1-x)+\int_{x}^{t}(t-u)f^{\prime \prime}(u)du\right ].$$
Taking the absolute value, by Theorem \ref{thmKrein} and by the sublinearity of $T_{n}$, we get
$$|T_{n}(f)(x)-f(x)T_{n}(e_0)(x)|=\left |T_{n}\left [f^{\prime}(x)(e_1-x)+\int_{x}^{t}(t-u)f^{\prime \prime}(u)du\right ]\right |$$
$$\le T_{n}\left [|f^{\prime}(x)|\cdot |e_1-x|+|\int_{x}^{t}(t-u)f^{\prime \prime}(u)du|\right ]$$
$$\le \|f^{\prime}\|_{\infty}T_{n}(|e_1-x|)(x)+T_{n}\left [\left |\int_{x}^{t}(t-u)f^{\prime \prime}(u)du\right |\right ].$$
This implies
\begin{equation}\label{eq5}
\|T_{n}(f)(x)-f(x)T_{n}(e_0)(x)\|_{p}
\end{equation}
$$\le \|f^{\prime}\|_{\infty}\|T_{n}(|e_1-x|)(x)\|_p+\left \|T_{n}\left [\left |\int_{x}^{t}(t-u)f^{\prime \prime}(u)du\right |\right ](x)\right \|_{p}.$$
Reasoning exactly as in Swetits-Wood \cite{SW1}, proof of Lemma 2, page 87 (that is by using the Hardy-Littlewood majorant), we get
$$\left \|T_{n}\left [\left |\int_{x}^{t}(t-u)f^{\prime \prime}(u)du\right |\right ](x)\right \|_{p}\le K_p \cdot \|T_{n}((e_1-x)^{2})(x)\|_{\infty}\cdot \|f^{\prime \prime}\|_{p}.$$
Therefore, (\ref{eq5}) becomes
\begin{equation}\label{eq6}
\|T_{n}(f)(x)-f(x)T_{n}(e_0)(x)\|_{p}
\end{equation}
$$\le \|f^{\prime}\|_{\infty}\cdot \|\left [T_{n}(|e_1-x|)(x)\right ]\|_{p}+
K_p \cdot \|T_{n}((e_1-x)^{2})(x)\|_{\infty}\cdot \|f^{\prime \prime}\|_{p}.$$
From (\ref{eq4}), it immediately follows
\begin{equation}\label{eq7}
\|T_{n}(f)(x)-f(x)\|_{p}\le \|f\|_{\infty}\cdot \|T_{n}(e_0)(x)- e_{0}(x)\|_{p}
\end{equation}
$$+\|f^{\prime}\|_{\infty}\cdot \|T_{n}(|e_1-x|)(x)\|_{p}+
K_p \cdot \|T_{n}((e_1-x)^{2})(x)\|_{\infty}\cdot \|f^{\prime \prime}\|_{p}.$$
Using now Theorem 3.1 in \cite{Gold}, from (\ref{eq7}) we immediately arrive to
\begin{equation}\label{eq8}
\|T_{n}(f)(x)-f(x)\|_{p}\le C^{\prime}_{p}(\|f\|_{p}+\|f^{\prime \prime}\|_{p})t_{n, p}^{2}.
\end{equation}
\end{proof}

The main result of this section is the following.

\begin{theorem}\label{thmfinal} Let $(T_n)_{n}$ be a uniformly bounded sequence of strongly translatable sublinear and monotone operators from $L_{p}([0, 1])$ into $L_{p}([0, 1])$, where $ 1 < p < \infty$, and denote $\mu_{n}=\|T_{n}((e_1-x)^{2})(x)\|_{\infty}$,
$$t_{n, p}=(\max\{\|T_{n}(e_0)-e_0\|_{p}, \|T_{n}(|e_1-x|)(x)\|_{p}, \mu_{n}\})^{1/2},$$

Supposing that $\lim_{n\to \infty}t_{n, p}=0$, for any $f\in L_{p}([0, ])$, we have
$$\|T_{n}(f) - f\|_{p}\le M_{p}[t^{2}_{n, p}\|f\|_{p}  +\omega_{2, p}(f; t_{n, p})],$$
where $M_p >0$ is independent of $f$ and $n$.
\end{theorem}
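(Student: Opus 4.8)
The plan is to transfer the smooth-function estimate of Lemma \ref{lem1} to an arbitrary $f\in L_p([0,1])$ by a $K$-functional smoothing argument, exactly in the way Theorem \ref{thm2}(i) is deduced from Theorem \ref{thm1}. Two facts drive the reduction: (a) uniform boundedness gives $M:=\sup_n\|T_n\|<\infty$, so by Theorem \ref{thmKrein} each $T_n$ is Lipschitz with a constant \emph{independent of} $n$, namely $\|T_n(u)-T_n(v)\|_p\le M\|u-v\|_p$; and (b) Lemma \ref{lem1}, which controls $\|T_n(g)-g\|_p$ for smooth $g$.

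First I would fix $f\in L_p([0,1])$ and an arbitrary comparison function $g\in L_p^{(2)}([0,1])$ and split, via the triangle inequality,
$$\|T_n(f)-f\|_p\le \|T_n(f)-T_n(g)\|_p+\|T_n(g)-g\|_p+\|g-f\|_p.$$
The outer two terms are bounded by $(1+M)\|f-g\|_p$ using the Lipschitz estimate (a), while the middle term is bounded by Lemma \ref{lem1} as $M_p'\,t_{n,p}^2\,(\|g\|_p+\|g''\|_p)$. Since $\|g\|_p+\|g''\|_p\le 2\|g\|_{2,p}$ directly from the definition of the Sobolev norm, collecting the estimates yields, with $C=\max\{1+M,\,2M_p'\}$,
$$\|T_n(f)-f\|_p\le C\bigl(\|f-g\|_p+t_{n,p}^2\|g\|_{2,p}\bigr).$$

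Next I would optimise over $g$. Taking the infimum over all $g\in W_p^2([0,1])$ turns the right-hand side into $C\,K_{2,p}(f;t_{n,p}^2)$, the $K$-functional introduced before \eqref{eq3} evaluated at the parameter $t_{n,p}^2$. Invoking the estimate \eqref{eq3} for $r=2$, so that the parameter $t_{n,p}^2$ produces the modulus argument $(t_{n,p}^2)^{1/2}=t_{n,p}$, gives
$$\|T_n(f)-f\|_p\le C_T\bigl(t_{n,p}^2\|f\|_p+\omega_{2,p}(f;t_{n,p})\bigr),$$
which is the assertion with $M_p=C_T$.

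Since the entire weight of the argument is carried by Lemma \ref{lem1} and Theorem \ref{thmKrein}, there is no serious obstacle at this stage; the passage mirrors the one used to obtain Theorem \ref{thm2} from Theorem \ref{thm1}. The two points that genuinely require care are ensuring that the Lipschitz constant is uniform in $n$ (this is precisely why uniform boundedness of $(T_n)_n$ is assumed, rather than mere continuity of each $T_n$), and matching the scalings correctly in \eqref{eq3}, so that the $K$-functional parameter $t_{n,p}^2$ produces the modulus at scale $t_{n,p}$ together with the coefficient $t_{n,p}^2$ in front of $\|f\|_p$.
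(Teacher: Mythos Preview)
Your argument is correct and follows essentially the same route as the paper: split $\|T_n(f)-f\|_p$ through an auxiliary smooth $g$, bound the outer terms by the uniform Lipschitz estimate from Theorem \ref{thmKrein} and the middle term by Lemma \ref{lem1}, then optimise over $g$ via the $K$-functional and its equivalence with $\omega_{2,p}$. The only cosmetic difference is that the paper cites the $K$-functional/modulus relations (2.1) and (2.3) of Swetits--Wood \cite{SW1} for the last step, whereas you invoke \eqref{eq3} with $r=2$; these are the same estimate.
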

\begin{proof}
Let $f\in L_{p}([0, 1])$ and $g\in L_{p}^{(2)}([0, 1])$. By Theorem \ref{thmKrein}, for each  $T_{n}$ we have
$$\|T_{n}(f)-T_{n}(g)\|_{p}\le \|T_{n}\|\cdot \|f - g\|_{p}, \mbox{ for all } f, g\in L_{p}([0, 1])\le R_{p}\cdot \|f-g\|_{p},$$
where $\|T_{n}\|\le R_{p}$ (with $R_{p}$ independent of $n$) for all $n\in \mathbb{N}$, from the uniform boundedness of the sequence $(T_{n})_{n}$.

Then, by
$$L_{n}(f)-f=L_{n}(f)-L_{n}(g)+L_{n}(g)-g + g - f,$$
passing to absolute value and to $\|\cdot\|_{p}$, we immediately get (by Lemma \ref{lem1} too)
$$\|L_{n}(f)-f\|_{p}\le \|L_{n}(f)-L_{n}(g)\|_{p}+\|L_{n}(g)-g\|_{p} + \|g - f\|_{p}$$
$$\le (1+R_{p})\|f-g\|_{p}+M^{\prime}_{p}\cdot t^{2}_{n, p}(\|g\|_{p}+\|g^{\prime \prime}\|_{p}).$$
Taking here the infimum after $g\in L_{p}^{(2)}([0, 1])$ and taking into account relations $(2.1)$ and $(2.3)$ on the page 88 in Swetits-Wood \cite{SW1}, we arriveat the desired conclusion.
\end{proof}

\begin{remark}
Since the calculation of $L_{n}(|e_1 - x|)(x)$ is difficult, we may estimate it by a simpler quantity in calculation,  by using the H\"older's inequality in Theorem 3 of Gal and Niculescu \cite{Gal-Nic-Aeq}. It follows
$$T_{n}(|e_1-x|)(x)\le (T_{n}((e_1-x)^{2})(x) T_{n}(e_0)(x))^{1/2}\le C^{1/2}(T_{n}((e_1-x)^{2})(x))^{1/2},$$
where $C>0$ is a constant independent of $n$ which comes from the uniform boundedness of the sequence $(T_{n})_{n}$.

This implies
$$\|T_{n}(|e_1-x|)(x)\|_{p}\le C^{1/2}\|\left [T_{n}((e_1-x)^{2})(x)\right ]^{1/2}\|_{p}.$$
\end{remark}
By this remark, we immediately arrive at the following result.
\begin{corollary}\label{coro}
Let $(T_n)_{n}$ be a uniformly bounded sequence of strongly translatable sublinear and monotone operators from $L_{p}([0, 1])$ into $L_{p}([0, 1])$, where $ 1 < p < \infty$, and denote $\mu_{n}=\|T_{n}((e_1-x)^{2})(x)\|_{\infty}$,
$$s_{n, p}=(\max\{\|T_{n}(e_0)-e_0\|_{p}, \|[T_{n}((e_1-x)^{2})(x)]^{1/2}\|_{p}, \mu_{n}\})^{1/2},$$

Supposing that $\lim_{n\to \infty}s_{n, p}=0$, for any $f\in L_{p}([0, 1])$, we have
$$\|T_{n}(f) - f\|_{p}\le M_{p}[s^{2}_{n, p}\|f\|_{p}  +\omega_{2, p}(f; s_{n, p})],$$
where $M_p >0$ is independent of $f$ and $n$.
\end{corollary}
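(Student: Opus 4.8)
The plan is to derive Corollary \ref{coro} directly from Theorem \ref{thmfinal} together with the H\"older-type estimate established in the preceding Remark, the whole point being to certify that replacing the awkward quantity $\|T_{n}(|e_1-x|)(x)\|_{p}$ by the more computable $\|[T_{n}((e_1-x)^{2})(x)]^{1/2}\|_{p}$ costs only a fixed multiplicative constant. First I would record the two definitions side by side: $t_{n,p}^{2}=\max\{\|T_{n}(e_0)-e_0\|_{p},\ \|T_{n}(|e_1-x|)(x)\|_{p},\ \mu_{n}\}$ and $s_{n,p}^{2}=\max\{\|T_{n}(e_0)-e_0\|_{p},\ \|[T_{n}((e_1-x)^{2})(x)]^{1/2}\|_{p},\ \mu_{n}\}$, which differ only in the middle entry.

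Second, I would invoke the Remark, which gives $\|T_{n}(|e_1-x|)(x)\|_{p}\le C^{1/2}\,\|[T_{n}((e_1-x)^{2})(x)]^{1/2}\|_{p}$ with $C$ coming from the uniform boundedness of $(T_{n})_{n}$. Without loss of generality one may assume $C\ge 1$, since enlarging $C$ only weakens this inequality. Comparing the maxima entry by entry then yields $t_{n,p}^{2}\le C^{1/2}\,s_{n,p}^{2}$, hence $t_{n,p}\le C^{1/4}\,s_{n,p}$. In particular the hypothesis $s_{n,p}\to 0$ forces $t_{n,p}\to 0$, so the assumptions of Theorem \ref{thmfinal} are met and its conclusion $\|T_{n}(f)-f\|_{p}\le M_{p}\bigl[t_{n,p}^{2}\|f\|_{p}+\omega_{2,p}(f;t_{n,p})\bigr]$ is available for every $f\in L_{p}([0,1])$.

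Third, I would translate this bound from $t_{n,p}$ to $s_{n,p}$. The first term is immediate: $t_{n,p}^{2}\|f\|_{p}\le C^{1/2}\,s_{n,p}^{2}\|f\|_{p}$. For the modulus term I would use the standard scaling property of the $L_p$ modulus of smoothness, namely $\omega_{2,p}(f;\lambda\delta)\le (1+\lambda)^{2}\,\omega_{2,p}(f;\delta)$ for $\lambda\ge 0$ (monotonicity in the step combined with the subadditivity $\omega_{r,p}(f;(m+1)\delta)\le (m+1)^{r}\omega_{r,p}(f;\delta)$). Since $t_{n,p}\le C^{1/4}s_{n,p}$, this gives $\omega_{2,p}(f;t_{n,p})\le (1+C^{1/4})^{2}\,\omega_{2,p}(f;s_{n,p})$. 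Absorbing $C^{1/2}$ and $(1+C^{1/4})^{2}$ together with the constant from Theorem \ref{thmfinal} into a single constant $M_{p}$ depending only on $p$ (through the uniform bound and the previous $M_{p}$) delivers exactly $\|T_{n}(f)-f\|_{p}\le M_{p}\bigl[s_{n,p}^{2}\|f\|_{p}+\omega_{2,p}(f;s_{n,p})\bigr]$.

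There is no genuinely hard step here; the proof is a purely formal reduction. The only points requiring care are the legitimacy of assuming $C\ge 1$ and the correct invocation of the modulus-scaling inequality, since it is precisely the factor $C^{1/4}\ge 1$ relating $t_{n,p}$ and $s_{n,p}$ that must be pushed through the modulus without damaging the form of the estimate; everything else is bookkeeping of constants.
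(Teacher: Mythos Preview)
Your proposal is correct and follows exactly the route the paper intends: the paper simply writes ``By this remark, we immediately arrive at the following result,'' and you have spelled out the ``immediate'' reduction by bounding $t_{n,p}$ in terms of $s_{n,p}$ via the H\"older-type inequality of the Remark and then pushing the constant through the modulus via its standard scaling property. Nothing is missing and no step differs in substance from the paper's (implicit) argument.
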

\begin{remark} \label{rem5}
If $(T_{n})_{n}$ is a sequence of strongly translatable sublinear and monotone operators from $L_{p}([0, 1])$ into $L_{p}([0, 1])$ which satisfies the conditions
$$\lim_{n\to \infty}\|T_{n}(e_0)-e_0\|_{\infty}=0, \lim_{n\to \infty}\|T_{n}(-e_1)+e_1\|_{\infty}=0, \lim_{n\to \infty}\|T_{n}(e_2)-e_2\|_{\infty}=0,$$
then $\lim_{n\to \infty}s_{n, p}=0$.

Indeed, this is immediate from the relations
$$0\le T_{n}((e_1-x)^{2})(x)=T_{n}[e_2 -2x e_1 + x^2e_0](x)$$
$$\le T_n(e_2)(x)+2x T_{n}(-e_1)(x)+x^{2}T_{n}(e_0)(x)\to 0 \mbox{ as } n\to \infty$$
and
$$\|[T_{n}((e_1-x)^{2})(x)]^{1/2}\|_{p}\le \|[T_{n}((e_1-x)^{2})(x)]^{1/2}\|_{\infty}.$$
\end{remark}

\section{Applications}

In this section we apply the previous results to some concrete cases.

\begin{example}\label{ex1}
Let us consider the sequence of Bernstein operators $B_{n}%
:C([0,1])\rightarrow C\left(  [0,1]\right)  ,$ defined by the formulas%
\[
B_{n}(f)(x)=\sum_{k=0}^{n}p_{n,k}(x)f(k/n).
\]
The operators $T_{n}:C([0,1])\rightarrow C\left (  [0,1]\right)  $
given by
\[
T_{n}(f)=\max\left\{  B_{n}(f),B_{n+1}(f)\right\}
\]
are sublinear, monotone and strongly translatable. Known computations imply that
$$B_{n}(e_0)(x)=1, B_{n}(-e_1)(x) = -e_1(x), B_{n}(e_1)=e_1(x), B_n(e_2)(x)=e_2(x) +\frac{x(1-x)}{n}.$$
These imply $T_{n}(e_0)=e_0$, $T_{n}(-e_1)=-e_1$, $T_{n}(e_1)=e_1$ and
$$T_{n}(e_2)(x)=\max\{B_{n}(e_2)(x), B_{n+1}(e_2)(x)\}=e_2(x)+\frac{x(1-x)}{n}.$$

Since for $p=\infty$ we immediately get
$$\lambda_{n, p}=\|x(1-x)/n\|_{p}\le \frac{1}{4 n},$$
by Theorem \ref{thm2}, (i), it follows the estimate
$$\|f - T_{n}(f)\|_{\infty}\le C\left [\|f\|_{\infty} \cdot \frac{1}{4 n}+\omega_{2, \infty}\left (f; \frac{1}{2 \sqrt{n}}\right )\right ],$$
which is completely different and essentially better than the estimate based on the Shisha and Mond's idea \cite{SM} in the paper Gal and Niculescu \cite{Gal-Nic-subm2}, namely
$$\|f-T_{n}(f)\|_{\infty}\le 2 \omega_{1, \infty}(f; 1/(2\sqrt{n})).$$
For $p=1$, by Theorem \ref{thm2}, (ii), we get
$$\|f - T_{n}(f)\|_{1}\le C\left [\|f\|_{1} \cdot \frac{1}{4 n}+\omega_{3, 1}\left (f; \frac{1}{(4 n)^{1/3}}\right )\right ].$$
For $1 < p < \infty$, since $T_{n}((e_1-x)^{2})(x)=\frac{x(1-x)}{n}$, $\|T_n((e_1-x)^{2})(x)\|_{\infty}\le \frac{1}{4 n}$ and
$\|[T_{n}((e_1-x)^{2})(x)]^{1/2}\|_{\infty}\le \frac{1}{2 \sqrt{n}}$,
by Corollary \ref{coro} and by Remark \ref{rem5} we obtain
$$\|T_{n}(f)-f\|_{p}\le M_{p}\left [\|f\|_{p}\cdot \frac{1}{2\sqrt{n}} + \omega_{2, p}\left (f; \frac{1}{\sqrt{2}\cdot n^{1/4}}\right )\right ].$$
\end{example}

\begin{example}\label{ex2}
Now, let us define the sequence of nonlinear operators $T_{n}:C\left(
[0,1]\right)  \rightarrow C\left(  [0,1]\right)  $ defined by the formulas
\[
T_{n}(f)(x)=\sum_{k=0}^{n}p_{n,k}(x)\sup_{[k/(n+1)\leq t\leq
,(k+1)/(n+1)]}f(t),
\]
which are monotone sublinear and strongly translatable. We have
$T_{n}(e_0)(x)=1$, $T_{n}(e_1)(x)=\frac{n}{n+1}e_{1}(x)$,
$T_{n}(-e_1)(x)=-\frac{n}{n+1} e_{1}(x)$,
$$\|T_{n}(e_1)-e_1\|_{p}\le \frac{1}{n+1}, \|T_{n}(-e_1)+e_1\|_{p}\le \frac{1}{n+1},$$
$$T_{n}(e_2)(x)=\sum_{k=0}^{n}p_{n, k}(x)\cdot \frac{(k+1)^{2}}{(n+1)^{2}}$$
$$=\left(\frac{n}{n+1}\right )^{2}\left (x^{2}+\frac{x(1-x)}{n}\right )+\frac{2 n}{(n+1)^{2}}x+\frac{1}{(n+1)^{2}},$$
$$\|T_{n}(e_2)-e_2\|_{p}=\left \|e_2 \frac{-3n -1}{(n+1)^{2}}+\frac{3n x+1}{(n+1)^{2}}\right \|_{p}\le \frac{6 n+2}{(n+1)^{2}}\le \frac{6}{n+1},$$
and then
$$\lambda_{n, p}\le \frac{6}{n+1}.$$
By  Theorem \ref{thm2}, (i), we get
$$\|f - T_{n}(f)\|_{\infty}\le C\left [\|f\|_{\infty} \cdot \frac{6}{n+1}+\omega_{2, \infty}\left (f; \frac{\sqrt{6}}{\sqrt{n+1}}\right )\right ],$$
and by Theorem \ref{thm2}, (ii), it follows
$$\|f - T_{n}(f)\|_{1}\le C\left [\|f\|_{1} \cdot \frac{6}{n+1}+\omega_{3, 1}\left (f; \frac{6}{(n+1)^{1/3}}\right )\right ].$$
For $1 < p < \infty$, since by Remark \ref{rem5} we have
$$0\le T_{n}((e_1-x)^{2})(x)\le T_n(e_2)(x)+2x T_{n}(-e_1)(x)+x^{2}T_{n}(e_0)(x)$$
$$=\left(\frac{n}{n+1}\right )^{2}\left (x^{2}+\frac{x(1-x)}{n}\right )+\frac{2 n}{(n+1)^{2}}x+\frac{1}{(n+1)^{2}}+2x^{2}\left (-\frac{n}{n+1}\right )+x^{2}$$
which by simple calculation finally leads to
$$0\le T_{n}((e_1-x)^{2})(x)\le \frac{1}{n}\cdot \frac{9}{4},$$
by Corollary \ref{coro} we obtain
$$\|T_{n}(f)-f\|_{p}\le M_{p}\left [\|f\|_{p}\cdot \frac{3}{2\sqrt{n}} + \omega_{2, p}\left (f; \frac{\sqrt{3}}{\sqrt{2}\cdot n^{1/4}}\right )\right ].$$
\end{example}
\begin{example}
Another example can be the Bernstein-Kantorovich-Choquet operators, given by the formula
\[
K_{n, \mu}(f)(x)=\sum_{k=0}^{n}p_{n,k}(x)\cdot\frac{(C)\int_{k/(m+1)}%
^{(k+1)/(n+1)}f(t)\mathrm{d}\mu(t)}{\mu([k/(n+1),(k+1)/(n+1)])},
\]
where $(C)\int d\mu$ means the Choquet integral with respect to $\mu=\sqrt{m}$, with $m$ the Lebesgue measure.
We omit here the calculations. For details concerning the properties of the Choquet integral and of  Bernstein-Kantorovich-Choquet operators, see, e.g., Gal-Niculescu \cite{Gal-Nic-Med}, \cite{Gal-Nic-Aeq}.
\end{example}

\section{Final remarks}

\begin{remark}\label{rem6} The estimate in Theorem  \ref{thmfinal} is worse than that for linear and positive operators in Theorem 2, (i) in \cite{SW1}, where the quantity $\|T_{n}(|e_1-x|)(x)\|_{p}$ is replaced by the smaller one $\|T_{n}(e_1-x)(x)\|_{p}$, since obviously $\|T_{n}(e_1-x)(x)\|_{p}\le \|T_{n}(|e_1-x|)(x)\|_{p}$. This seems to be the price paid due to the more general hypothesis on the operators $L_{n}$ in the above Theorem \ref{thmfinal}.
\end{remark}
However, in the case of Example \ref{ex1}, since it is easy to show that
$$\max\{B_n(f), B_{n+1}(f)\} - f = \max\{B_n(f) - f, B_{n+1}(f) - f\},$$
and
$$|\max\{B_n(f) - f, B_{n+1}(f) - f\}|\le \max\{|B_n(f) - f|, |B_{n+1}(f) - f|\} ,$$
we immediately get
$$\|\max\{B_n(f), B_{n+1}(f)\} - f\|_{p} \le \max\{\|B_n(f) - f\|_{p}, \|B_{n+1}(f) - f\|_{p}\}$$
$$\le \|B_n(f) - f\|_{p}+\|B_{n+1}(f) - f\|_{p}.$$
Therefore, applying here the estimate in Swetits-Wood for $1<p<\infty$ and for the linear and positive operators $B_{n}$, clearly that we get essentially better estimate than that in Theorem \ref{thmfinal} and Corollary \ref{coro}, calculated in the above Example \ref{ex1}.

In the case of Example \ref{ex2}, we can write
$$T_{n}(f)(x)=\sum_{k=0}^{n}p_{n, k}(x)f(\xi_{n, k}),$$
with $k/(n+1)\le \xi_{n, k}\le (k+1)/(n+1)$, which immediately leads to
$$\|T_{n}(f)(x)-B_{n}(f)(x)|\le \sum_{k=0}^{n}p_{n, k}(x)|f(\xi_{n, k})-f(k/n)|\le \omega_{1}(f; 1/(n+))$$
and
$$|T_{n}(f)(x)-f(x)|\le |T_{n}(f)(x) - B_{n}(f)(x)|+|B_{n}(f)(x)-f(x)|.$$
By the result in Swetits-Wood applied to the positive linear operators $B_{n}(f)(x)$, for $1<p<\infty$, it immediately follows
$$\|T_{n}(f)-f\|_{p}\le M_p\left (\frac{1}{n}\|f\|_{p}+\omega_{2, p}(f;1/\sqrt{n})_{p}\right )+\omega(f; 1/(n+1)),$$
which clearly it is essentially better than the estimate in the previous section.

These  considerations suggest that possibly the shortcoming in Remark \ref{rem6} is due to the method of proof in Swetits-Wood \cite{SW1} and raise the following.

{\bf Open Question.} Extend the classical interpolation technique from linear continuous operators  to sublinear, monotone and strongly translatable operators.

A positive answer would allow to essentially improve the estimate  in Theorem \ref{thmfinal} by following now the method of proof in Theorem 3 of Berens-DeVore \cite{BD}.

\end{document}